\newtheorem{theorem}{Theorem}
\newtheorem{remark}{Remark}
\newtheorem{definition}{Definition}
\newtheorem{lemma}{Lemma}
\definecolor{mygray}{gray}{0.8}
\title{\LARGE \bf Necessary and Sufficient Conditions for Stability of Discrete-Time Switched Linear Systems with Ranged Dwell Time}
\author{Weiming Xiang,~\IEEEmembership{Senior Member, IEEE}
\thanks{}
\thanks{W. Xiang is with the School of Computer and Cyber Sciences, Augusta University, Augusta GA 30912 USA. Email:
        {\tt\small wxiang@augusta.edu}}
}
\begin{document}

\maketitle
\thispagestyle{empty}
\pagestyle{empty}

\begin{abstract}
\boldmath
This paper deals with the stability analysis problem of discrete-time switched linear systems with ranged dwell time. A novel concept called $L$-switching-cycle is proposed, which contains sequences of multiple activation cycles satisfying the prescribed ranged dwell time constraint. Based on $L$-switching-cycle, two sufficient conditions are proposed to ensure the global uniform asymptotic stability of discrete-time switched linear systems. It is noted that two conditions are equivalent in stability analysis with the same $L$-switching-cycle. These two sufficient conditions can be viewed as generalizations of the clock-dependent Lyapunov and multiple Lyapunov function methods, respectively. Furthermore, it has been proven that the proposed $L$-switching-cycle can eventually achieve the nonconservativeness in stability analysis as long as a sufficiently long $L$-switching-cycle is adopted. A numerical example is provided to illustrate our theoretical results.
\end{abstract}

\section{Introduction}
Switched systems have emerged as an important subclass of hybrid systems which consist of a finite number of subsystems described by differential or difference equations and a switching rule orchestrating the activation of those subsystems along with time. The motivation of studying switched systems comes from the fact that switched systems can be used to efficiently model many practical systems such as cyber-physical systems which are inherently with multiple working modes coordinated by logic rules, and to take the
electronic circuits field for example, DC/DC converters \cite{ma2004bifurcation}, oscillators \cite{torikai1998synchronization}, and chaos generators \cite{mitsubori1997dependent}. In addition, many control system problems can be incorporated in the framework of switched systems, e.g., event-triggered control systems \cite{heemels2012periodic,xiang2017event}, sampled-data systems \cite{hauroigne2011switched}, networked control systems \cite{zhang2012network}, time-delay systems \cite{hetel2008equivalence,wang2019necessary,xiang2018parameter}, etc.  

Among the large variety of problems studied in theory and encountered in practice, stability analysis of switched systems is a fundamental problem, which attracts considerable research attention in decades, readers may refer to \cite{liberzon2003switching,lin2009stability,goedel2012hybrid}, and the references cited therein. Lyapunov functions have been proved to be a powerful tool to deal with stability analysis for switched systems under either arbitrary switching or restricted switching. Under arbitrary switching, classical common Lyapunov function and switched Lyapunov function methods are proposed in \cite{daafouz2002stability}. Recently, a multiple-step method is proposed in \cite{xiang2017robust} which extends the construction of Lyapunov functions for each subsystem to the combination of subsystems in a finite-time window. For restricted switching, dwell time is usually used to characterize the switching rate of a switched system. There are a variety of Lyapunov function results developed for switched systems under dwell time switching. Multiple Lyapunov functions are constructed for subsystems, e.g., stability criteria in \cite{geromel2006stability,hespanha1999stability,zhai2001stability}. Clock-dependent Lyapunov function methods are proposed for discrete-time switched systems in \cite{briat2014convex}. Moreover, it has been demonstrated in \cite{briat2014convex} that multiple Lyapunov function methods and clock-dependent Lyapunov function methods are equivalent in stability analysis. In \cite{lee2006uniform}, a finite-path-dependent quadratic Lyapunov function is proposed for the stability of switched systems with strongly connected switching path constraint. Recently, a class of homogeneous polynomial Lyapunov functions with a sufficiently
large degree are constructed in \cite{xiang2016necessary} to perform nonconservative stability analysis for continuous-time switched linear systems.

In this paper, we aim to develop nonconservative stability criteria for stability of discrete-time switched linear systems with a ranged dwell time constraint. A novel concept called $L$-switching-cycle is introduced to improve both the multiple Lyapunov function approach and the clock-dependent Lyapunov function approach. First, two equivalent sufficient stability criteria are proposed in the framework of $L$-switching-cycle, which are the generalizations of results of multiple Lyapunov function approach and clock-dependent Lyapunov function approach, respectively. Moreover, it is proved that the $L$-switching-cycle approach can achieve nonconservative stability analysis as long as the length of $L$ is sufficiently large. At last, a comparison is made between the developed $L$-switching-cycle methods and several well-known stability criteria. It is noted that the $L$-switching cycle method can cover various existing stability criteria.  


\emph{Notations:} $\mathbb{N}$ denotes the set of natural numbers, $\mathbb{R}$ represents the field of real numbers. $\mathbb{R}^{+}$ stands for the set of nonnegative real numbers, and $\mathbb{R}^{n}$ is the vector space of all $n$-tuples
of real numbers, $\mathbb{R}^{n \times n}$ is the space of $n\times n$ matrices with real entries. The set of $n \times n$ positive definite symmetric matrices is denoted by $\mathbb{S}^{n}_{\succ 0}$. 
Given a matrix $X$, the notation $X \succ 0$ means $X$ is real symmetric and positive definite. Given another matrix $Y$, $X \succ Y$ means that
$X - Y \succ 0$. $X^{\top}$ denotes the transpose of $X$. A continuous function $\alpha:\mathbb{R}^{+} \to \mathbb{R}^{+}$ is a class $\mathcal{K}$ function if it is strictly increasing and $\alpha(0) = 0$. Moreover, a function $\beta:\mathbb{R}^{+} \times  \mathbb{R}^{+} \to \mathbb{R}^{+}$ is a class $\mathcal{KL}$ function if, for each fixed $s$, function $\beta(r,s)$ is a class $\mathcal{K}$ function with respect to $r$ and, for each fixed $r$, function $\beta(r,s)$ is decreasing with respect to $s$ and $\beta(r,s) \to 0$ as $s \to \infty$. For two integers
$n$ and $m$, $n \le m$, we define $\mathcal{I}[n, m]  \triangleq \{n,n+1,\ldots,m\}$.

\section{Preliminaries and Problem Formulation}
In this paper, we consider a class of discrete-time switched linear systems in the form of 
\begin{align} \label{eq:system}
    x(k+1) = A_{\sigma(k)}x(k)
\end{align}
where $x(k) \in \mathbb{R}^{n_x}$ is system state, $A_i \in \mathbb{R}^{n_x \times n_x}$ are system matrices. $\sigma:\mathbb{N} \to \mathcal{I}[1,N]$ is a piecewise constant function of time, called switching law or switching signal,
which takes value in a finite index set $\mathcal{I}[1,N] \triangleq \{1, 2, \ldots , N\}$, where $N > 0$ is the number of subsystems. Let the discontinuity points of $\sigma(k)$ be denoted by $k_n$, and let $k_0$ stand for the initial time by convention, the switching sequence can be described as $\mathcal{S} \triangleq \{k_n\}_{n \in \mathbb{N}}$. Calling $\mathcal{D}_{[\tau_{\min},\tau_{\max}]}$ the
set of all switching policies with a ranged  dwell time $[\tau_{\min},\tau_{\max}]$ with $1\le \tau_{\min}\le \tau_{\max} < \infty$, that is the set of all $\sigma(k)$ for which the time interval between successive discontinuities of $\sigma(k)$
satisfies $k_{n+1} - k_n \in [\tau_{\min},\tau_{\max}]$ , $\forall n \in \mathbb{N}$. We also define $d = \tau_{\max} -\tau_{\min} + 1$ in the rest of this paper. 

\begin{definition} \cite{geromel2006stability,khalil2002nonlinear}
The equilibrium $x = 0$ of system (\ref{eq:system}) is globally uniformly asymptotically stable (GUAS) under
switching signal $\sigma(k)$ if, for any initial condition $x(0)$, there
exists a class $\mathcal{KL}$ function $\beta$  such that the solution of system
(\ref{eq:system}) satisfies $\left\|x(k)\right\| \le \beta(\left\|x(0)\right\|,k)$, $\forall k \in \mathbb{N}$. 
\end{definition}

In terms of stability analysis for switched systems, the following two results are recalled. 

\begin{lemma} \cite{li2019dwell,briat2013convex}\label{lemma1}
Switched system (\ref{eq:system}) is GUAS with any switching signals $\sigma(k) \in \mathcal{D}_{[\tau_{\min},\tau_{\max}]}$ if there exist symmetric matrix sequences $P_{h}:\mathcal{I}[0,\tau_{\max}] \to \mathbb{S}^{n_x}_{\succ 0}$, $h \in \mathcal{I}[1, N]$ such that
\begin{align}
         A_{i}^{\top}P_{i}(k+1)A_{i}-P_{i}(k) &\prec 0\label{eq:lemma1_1}
         \\
         P_{i}(0) - P_{j}(\tau) &\prec 0 \label{eq:lemma1_2}
    \end{align}
hold for all $k \in [0,\tau_{\max}-1]$, $\tau \in [\tau_{\min},\tau_{\max}]$, $i,j \in \mathcal{I}[1, N]$. 
\end{lemma}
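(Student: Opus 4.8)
The plan is to build a clock‑dependent Lyapunov function and show it contracts strictly and uniformly along every trajectory produced by a signal in $\mathcal{D}_{[\tau_{\min},\tau_{\max}]}$. Fix such a $\sigma$ with switching sequence $\mathcal{S}=\{k_n\}_{n\in\mathbb{N}}$, and introduce the clock $\theta(k)=k-k_n$ for $k\in[k_n,k_{n+1})$, which is reset to $0$ at every switch. Because $k_{n+1}-k_n\le\tau_{\max}$, the clock always satisfies $\theta(k)\in\mathcal{I}[0,\tau_{\max}-1]$, so that both $P_{\sigma(k)}(\theta(k))$ and $P_{\sigma(k)}(\theta(k)+1)$ are well defined on the domain $\mathcal{I}[0,\tau_{\max}]$. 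Define $V(k)=x(k)^{\top}P_{\sigma(k)}(\theta(k))x(k)$, which by positive definiteness and finiteness of the matrix family $\{P_h(\ell):h\in\mathcal{I}[1,N],\ \ell\in\mathcal{I}[0,\tau_{\max}]\}$ obeys $c_1\|x(k)\|^2\le V(k)\le c_2\|x(k)\|^2$ for constants $0<c_1\le c_2$ independent of $\sigma$.

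Next I would verify one‑step strict decrease of $V$ in two cases. If $k$ and $k+1$ lie in the same dwell interval then $\sigma(k)=\sigma(k+1)=i$ and $\theta(k+1)=\theta(k)+1$, so $V(k+1)=x(k)^{\top}A_i^{\top}P_i(\theta(k)+1)A_ix(k)$, and condition \eqref{eq:lemma1_1} applied at the clock value $\theta(k)\in[0,\tau_{\max}-1]$ gives $V(k+1)<V(k)$ whenever $x(k)\neq 0$. If $k+1=k_{n+1}$ is a switching instant, write $\sigma(k)=i$, $\sigma(k+1)=j$, and $\tau=k_{n+1}-k_n\in[\tau_{\min},\tau_{\max}]$, so $\theta(k)=\tau-1$ and $\theta(k+1)=0$; then $V(k+1)=x(k)^{\top}A_i^{\top}P_j(0)A_ix(k)$, and chaining the reset bound \eqref{eq:lemma1_2} in the form $P_j(0)\prec P_i(\tau)$ with the contraction bound \eqref{eq:lemma1_1} at clock $\tau-1$, namely $A_i^{\top}P_j(0)A_i\preceq A_i^{\top}P_i(\tau)A_i\prec P_i(\tau-1)$, again yields $V(k+1)<V(k)$.

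To pass from strict decrease to exponential decay uniformly in $\sigma$, I would note that \eqref{eq:lemma1_1}–\eqref{eq:lemma1_2} are finitely many strict matrix inequalities, hence enjoy a common margin $\varepsilon>0$; combined with $P_h(\ell)\preceq c_2 I$ this furnishes a single rate $\lambda\in(0,1)$, depending only on $\varepsilon,c_2$, such that $V(k+1)\le\lambda V(k)$ at every $k$, for the interior steps and the switching steps alike. Iterating gives $V(k)\le\lambda^{k}V(0)$, and therefore $\|x(k)\|\le\sqrt{c_2/c_1}\,\lambda^{k/2}\|x(0)\|$, i.e.\ the $\mathcal{KL}$ estimate of the stability definition with $\beta(r,s)=\sqrt{c_2/c_1}\,\lambda^{s/2}r$, which holds for every $\sigma\in\mathcal{D}_{[\tau_{\min},\tau_{\max}]}$; this is exactly GUAS.

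The algebra is routine; the step that needs care is the switching instant, where the two hypotheses must be combined in the correct order — first apply the reset inequality \eqref{eq:lemma1_2} to move from $P_j(0)$ to $P_i(\tau)$, then the contraction inequality \eqref{eq:lemma1_1} at clock $\tau-1$ to move from $P_i(\tau)$ to $P_i(\tau-1)$ — together with the observation that $A_i$ may be singular, so congruence only preserves the non‑strict order, the strictness being recovered from \eqref{eq:lemma1_1}. I would also double‑check the index bookkeeping: that $\theta(k)$ never reaches $\tau_{\max}$ while $P_h$ is still evaluated up to $\tau_{\max}$ in \eqref{eq:lemma1_1}, that $\tau-1\in[0,\tau_{\max}-1]$ since $\tau_{\min}\ge1$, and that the first interval starting at $k_0$ and the degenerate case $\tau_{\min}=\tau_{\max}$ are covered by the same argument, with the interior‑step case possibly vacuous when $d=1$.
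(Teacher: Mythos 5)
Your proof is correct. Note that the paper itself does not prove Lemma \ref{lemma1} at all --- it is recalled from the cited references --- so the only point of comparison is the paper's proof of Theorem \ref{thm1}, statement (\emph{a}) $\Rightarrow$ GUAS, of which Lemma \ref{lemma1} is exactly the $L=1$ special case. Your argument follows the same clock-dependent Lyapunov route used there (decrease along each dwell interval via \eqref{eq:lemma1_1}, decrease across the reset via \eqref{eq:lemma1_2}), but you handle two points more carefully than the paper does: the congruence by a possibly singular $A_i$ at the switching instant, where you correctly chain $A_i^{\top}P_j(0)A_i \preceq A_i^{\top}P_i(\tau)A_i \prec P_i(\tau-1)$ and recover strictness from \eqref{eq:lemma1_1}; and the passage from pointwise strict decrease to a uniform rate $\lambda<1$ via the common margin of the finitely many strict LMIs, which is what actually delivers the $\mathcal{KL}$ bound required by the GUAS definition --- the paper simply appeals to the ``standard Lyapunov theorem'' at this step. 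I see no gap.
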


\begin{lemma} \cite{li2019dwell,briat2013convex} \label{lemma2}
Switched system (\ref{eq:system}) is GUAS with any switching signals $\sigma(k) \in \mathcal{D}_{[\tau_{\min},\tau_{\max}]}$ if  there exist matrices $P_h \in \mathbb{S}_{\succ 0}^{n_x}$, $h \in \mathcal{I}[1, N]$ such that
    \begin{align}
         (A_i^{\tau})^{\top}{P_{i}}(A_i^{\tau})-P_{j} \prec 0 \label{eq:lemma2_1}
    \end{align}
   hold for $\tau \in [\tau_{\min},\tau_{\max}]$, $i,j \in \mathcal{I}[1, N]$.
\end{lemma}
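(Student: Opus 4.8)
The plan is to run a multiple-Lyapunov-function argument on the state sampled at the switching instants and then interpolate to a bound at every time step. Write the switching sequence as $\{k_n\}_{n\in\mathbb{N}}$, let $\ell_n=\sigma(k_n)$ be the mode active on $[k_n,k_{n+1})$ and $\tau_n=k_{n+1}-k_n\in[\tau_{\min},\tau_{\max}]$, and set $x_n:=x(k_n)$, so that $x_{n+1}=A_{\ell_n}^{\tau_n}x_n$. The essential bookkeeping choice is to attach to the instant $k_n$ (for $n\ge 1$) the quadratic form of the mode that has \emph{just finished}, namely $W(n):=x_n^{\top}P_{\ell_{n-1}}x_n$, which is positive definite since $P_{\ell_{n-1}}\succ 0$. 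Reading \eqref{eq:lemma2_1} with $i=\ell_n$, $j=\ell_{n-1}$, $\tau=\tau_n$ (legitimate because $\tau_n\in[\tau_{\min},\tau_{\max}]$) as the matrix inequality $(A_{\ell_n}^{\tau_n})^{\top}P_{\ell_n}(A_{\ell_n}^{\tau_n})\prec P_{\ell_{n-1}}$ and evaluating it at $x_n$ together with $x_{n+1}=A_{\ell_n}^{\tau_n}x_n$ yields
\begin{align}
x_{n+1}^{\top}P_{\ell_n}x_{n+1}<x_n^{\top}P_{\ell_{n-1}}x_n ,
\end{align}
that is, $W(n+1)<W(n)$ for every $n\ge 1$.

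Next I would turn strict decrease into geometric decay by exploiting finiteness. The set of triples $(p,q,\tau)$ with $p,q\in\mathcal{I}[1,N]$ and $\tau\in[\tau_{\min},\tau_{\max}]$ is finite, and each strict inequality \eqref{eq:lemma2_1}, together with $P_p,P_q\succ 0$, furnishes a constant $\rho_{p,q,\tau}\in(0,1)$ with $(A_p^{\tau})^{\top}P_p(A_p^{\tau})\preceq\rho_{p,q,\tau}P_q$; put $\rho:=\max_{p,q,\tau}\rho_{p,q,\tau}<1$. Then $W(n+1)\le\rho\,W(n)$ for all $n\ge 1$, hence $W(n)\le\rho^{\,n-1}W(1)$. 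Applying \eqref{eq:lemma2_1} once more on the first interval (with any fixed second index) bounds $W(1)=x_0^{\top}(A_{\ell_0}^{\tau_0})^{\top}P_{\ell_0}(A_{\ell_0}^{\tau_0})x_0\le\bar\lambda\,\|x(0)\|^{2}$ with $\bar\lambda:=\max_h\lambda_{\max}(P_h)$, so, writing $\underline\lambda:=\min_h\lambda_{\min}(P_h)>0$,
\begin{align}
\|x_n\|^{2}\le\underline\lambda^{-1}W(n)\le(\bar\lambda/\underline\lambda)\,\rho^{\,n-1}\|x(0)\|^{2},
\end{align}
a bound whose constants are independent of the switching signal.

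Finally I would interpolate. For $k\in[k_n,k_{n+1})$ we have $x(k)=A_{\ell_n}^{k-k_n}x_n$ with $0\le k-k_n\le\tau_{\max}$, hence $\|x(k)\|\le M\|x_n\|$ where $M:=\max\{\|A_i^{m}\|:i\in\mathcal{I}[1,N],\,0\le m\le\tau_{\max}\}$. Because every dwell is at most $\tau_{\max}$, $k<k_{n+1}\le k_0+(n+1)\tau_{\max}$, i.e. $n\ge (k-k_0)/\tau_{\max}-1$, and substituting this into the preceding estimate gives $\|x(k)\|\le\beta(\|x(0)\|,k)$ for an exponentially decaying class $\mathcal{KL}$ function $\beta$ assembled only from $\{A_i\}$, $\{P_h\}$, $\tau_{\min}$ and $\tau_{\max}$; since $\beta$ does not depend on $\sigma$, this is GUAS uniformly over $\sigma\in\mathcal{D}_{[\tau_{\min},\tau_{\max}]}$. (The constraint $k_{n+1}-k_n\le\tau_{\max}<\infty$ forces infinitely many switches and $k_n\to\infty$, so no separate ``switching eventually stops'' case arises.)

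I expect the only genuinely delicate step to be the index alignment in the first paragraph: pairing the instant $k_n$ with the \emph{previous} mode's matrix $P_{\ell_{n-1}}$ rather than the active mode's $P_{\ell_n}$ is exactly what makes \eqref{eq:lemma2_1} telescope along the sampled trajectory, and one must treat the first interval separately because there is no previous mode there; extracting the uniform ratio $\rho<1$ and converting the switch-index decay into a time decay are then routine. Note that no invertibility of the $A_i$ is used anywhere, so the argument also covers singular subsystem matrices.
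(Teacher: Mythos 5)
Your proof is correct. Note, however, that the paper does not actually prove Lemma \ref{lemma2}: it is recalled from the cited references, and within the paper it arises only as the $L=1$ instance of condition (\emph{b}) of Theorem \ref{thm1}. The route the paper's own machinery would take is therefore genuinely different from yours: it first converts the constant matrices $P_i$ into clock-dependent sequences $P_i(k)$ by solving a discrete Lyapunov difference equation backwards over each dwell interval (the (\emph{b}) $\Rightarrow$ (\emph{a}) step), and then argues decrease of the resulting clock-dependent Lyapunov function both inside each interval and at the concatenation instants, concluding GUAS by appeal to a ``standard Lyapunov theorem.'' You instead work directly with the sampled dynamics $x_{n+1}=A_{\ell_n}^{\tau_n}x_n$ at the switching instants, pair each sample with the matrix of the mode that has just finished so that \eqref{eq:lemma2_1} telescopes, and then extract a uniform contraction factor $\rho<1$ from the finitely many strict LMIs before interpolating back to all time steps. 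Your version is more elementary (no auxiliary matrix sequences to construct) and more quantitative: it yields an explicit exponential $\mathcal{KL}$ bound that is visibly uniform in $\sigma$, a point the paper's argument for Theorem \ref{thm1} leaves implicit; the paper's route, on the other hand, produces the clock-dependent certificate of Lemma \ref{lemma1} as a by-product, which is what makes its conditions convex in the $A_i$ and hence extendable to synthesis. The only cosmetic blemish is the claim $W(n+1)<W(n)$ in your first paragraph, which degenerates to equality when $x_n=0$; your subsequent contraction form $W(n+1)\le\rho\,W(n)$ already covers that case, so nothing breaks.
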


\begin{remark}
As observed in \cite{li2019dwell,briat2013convex}, the above two lemmas are essentially equivalent. Each stability criterion has its own advantages, e.g., conditions in Lemma \ref{lemma1} are convex in system matrices $A_i$ so that the results can be easily generalized to other related problems such as $\ell_2$-gain analysis, controller design, etc. On the other hand, the condition in Lemma \ref{lemma2} is with less computational cost as fewer decision variables and constraints are involved than Lemma \ref{lemma1}. However,  both lemmas are sufficient conditions for stability analysis. In this work, we aim to further extend these two results to achieve less conservative or even nonconservative stability analysis results. 
\end{remark}

Inspired by the multiple-step method proposed in \cite{xiang2017robust} for arbitrary switching, we propose a novel concept called $L$-switching-cycle to handle ranged dwell time switching, and it will be playing a critical role in the rest of this paper. 

\begin{definition}\label{def2}
Given an $L \in \mathbb{N}$,  an $L$-switching-cycle sequence is a sub-sequence $\mathcal{S}_L \triangleq \{k_{nL}\}_{n \in \mathbb{N}}$ stemmed from switching sequence $\mathcal{S}$.  Considering time interval $[k_{nL}, k_{(n+1)L})$, there are $N(N-1)^{L-1}$ combinations of subsystem invocations in $[k_{nL}, k_{(n+1)L})$ and moreover, due to dwell time $\tau \in [\tau_{\min},\tau_{\max}]$, there are $N(N-1)^{L-1}d^{L}$ sequences of trajectories of $\sigma(k)$.  The sequences of indices of subsystems in the $h$th combination and subsystem activation time are expressed by
\begin{align}
    \mathcal{L}_h& = \{i_{h,1},i_{h,2},\ldots,i_{h,L}\}
    \\
    \Gamma_h&  = \{\tau_{h,1},\tau_{h,2},\ldots,\tau_{h,L}\},~\tau_{h,\ell} \in [\tau_{\min},\tau_{\max}]
\end{align}
where $h \in \mathcal{I}[1, N(N-1)^{L-1}d^{L}]$, $i_{h,1},\ldots,i_{h,L} \in \mathcal{I}[1,N]$ and $i_{h,\ell} \ne i_{h,\ell+1}$, $\ell \in \mathcal{I}[1,L-1]$. The length of activation time of $\mathcal{L}_h$ is denoted by $T_h = \sum\nolimits_{i=1}^{L}{\tau_{h,i}}$. 
\end{definition}
\begin{remark}
If $i_{h,\ell} = i_{h,\ell+1}$ is allowed in $\mathcal{L}_h$, the special case with dwell time $\tau_{\max}=\tau_{\min}=1$ implies arbitrary switching, the $L$-switching-cycle sequence is reduced to a  multiple-step sequence with a length of $L$ as proposed in \cite{xiang2017robust}. Thus, the $L$-switching-cycle sequence can be viewed as a generalization of the concept of  multiple-step sequence, from arbitrary switching with $\tau_{\max}=\tau_{\min}=1$ to ranged dwell-time constrained switching $\tau_{\max} \ge \tau_{\min} \ge 1$. 
\end{remark}

\section{Main Results}

\subsection{Stability Criteria}

As presented in the following theorem, two stability criteria are developed based on $L$-switching-cycle as defined in Definition \ref{def2}, which is the main contribution of this paper. 

\begin{theorem} \label{thm1}
For switched linear system (\ref{eq:system}) and given an $L \in \mathbb{N}$, the following two statements are equivalent:

\begin{enumerate}
    \item [(\emph{a})] There exist symmetric matrix sequences $P_{h}:\mathcal{I}[0,T_{h}] \to \mathbb{S}^{n_x}_{\succ 0}$, $h \in \mathcal{I}[1, N(N-1)^{L-1}d^{L}]$ such that the following conditions
    \begin{align}
         A_{i_{h,\ell}}^{\top}P_{h}(k+1)A_{i_{h,\ell}}-P_{h}(k) &\prec 0\label{eq:thm1_1}
         \\
         P_{h}(0) - P_{q}(T_q) &\prec 0 \label{eq:thm1_2}
    \end{align}
    hold for all $k \in [0,T_h-1]$, $h,q \in \mathcal{I}[1, N(N-1)^{L-1}d^{L}]$, $i_{h,\ell} \in \mathcal{I}[1,N]$ and $\ell \in \mathcal{I}[1,L]$.
    \item [(\emph{b})] There exist symmetric matrices $P_h \in \mathbb{S}_{\succ 0}^{n_x}$, $h \in \mathcal{I}[1, N(N-1)^{L-1}d^{L}]$ such that
    \begin{align}
         \Phi_h^{\top}{P_{h}}\Phi_h-P_{q} \prec 0 \label{eq:thm1_3}
    \end{align}
   where $\Phi_h = \prod_{\ell={0}}^{L-1}A^{\tau_{h,L-\ell}}
        _{i_{h,L-\ell}}$, holds for all $h,q \in \mathcal{I}[1, N(N-1)^{L-1}d^{L}]$.
\end{enumerate}
and if one of the above two statements holds, then switched system (\ref{eq:system}) is GUAS with any switching signals $\sigma(k) \in \mathcal{D}_{[\tau_{\min},\tau_{\max}]}$.
\end{theorem}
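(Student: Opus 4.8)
The plan is to prove the two implications (a)$\Rightarrow$(b) and (b)$\Rightarrow$(a), and then show that (a) by itself already produces a $\mathcal{KL}$-estimate, hence GUAS. Everything rests on two elementary observations about the \emph{finite} families of LMIs involved (recall there are only $N(N-1)^{L-1}d^{L}$ patterns $h$ and each $T_h\le L\tau_{\max}$): a finite collection of strict inequalities $M_s\prec0$ admits a common gap $M_s\preceq-\mu I$, $\mu>0$; and a finite collection $P_s\succ0$ is uniformly sandwiched, $\underline{\alpha}I\preceq P_s\preceq\overline{\alpha}I$. For (a)$\Rightarrow$(b), fix a pattern $h$ and let $\Psi_h(k)$ be the transition matrix of cycle $h$ from local time $0$ to local time $k$, so $\Psi_h(0)=I$, $\Psi_h(k{+}1)=A_{i_{h,\ell(k)}}\Psi_h(k)$ (with $\ell(k)$ the active leg at local time $k$) and $\Psi_h(T_h)=\Phi_h$. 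A one-step induction on (\ref{eq:thm1_1}), taking congruence by $\Psi_h(k)$, gives $\Psi_h(k{+}1)^{\top}P_h(k{+}1)\Psi_h(k{+}1)\preceq\Psi_h(k)^{\top}P_h(k)\Psi_h(k)$, hence $\Phi_h^{\top}P_h(T_h)\Phi_h\preceq P_h(0)$ (only $\preceq$, since the $A_i$, and thus $\Psi_h(k)$, may be singular). I would then set $P_h:=P_h(T_h)$: combining with the \emph{strict} junction inequality (\ref{eq:thm1_2}), $P_h(0)\prec P_q(T_q)=P_q$, yields $\Phi_h^{\top}P_h\Phi_h-P_q\prec0$ for all $h,q$, i.e. (\ref{eq:thm1_3}).

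For (b)$\Rightarrow$(a) I would build the clock-dependent sequences backwards. Fix $h$; by (\ref{eq:thm1_3}) and finiteness there is $\delta_h>0$ with $\Phi_h^{\top}P_h\Phi_h+\delta_h I\prec P_q$ for every $q$. Put $P_h(T_h):=P_h$ and, for $k=T_h{-}1,\dots,0$, $P_h(k):=A_{i_{h,\ell(k)}}^{\top}P_h(k{+}1)A_{i_{h,\ell(k)}}+\varepsilon_k I$ with $\varepsilon_k>0$. Each $P_h(k)$ is positive definite, and $P_h(k)-A_{i_{h,\ell(k)}}^{\top}P_h(k{+}1)A_{i_{h,\ell(k)}}=\varepsilon_k I\succ0$ is precisely (\ref{eq:thm1_1}). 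Unrolling the recursion gives $P_h(0)=\Phi_h^{\top}P_h\Phi_h+\sum_{k=0}^{T_h-1}\varepsilon_k\Psi_h(k)^{\top}\Psi_h(k)$, so choosing the $\varepsilon_k$ small enough that this sum is $\prec\delta_h I$ yields $P_h(0)\prec P_q=P_q(T_q)$, i.e. (\ref{eq:thm1_2}).

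For (a)$\Rightarrow$GUAS, take any $\sigma\in\mathcal{D}_{[\tau_{\min},\tau_{\max}]}$ with switching sequence $\{k_n\}$; since $\tau_{\max}<\infty$ there are infinitely many switches, so the windows $[k_{nL},k_{(n+1)L})$ tile $\mathbb{N}$ and each one realizes exactly one pattern $h_n$. I would define the cycle-and-clock-dependent function $V(k):=x(k)^{\top}P_{h_n}(k-k_{nL})x(k)$ on $[k_{nL},k_{(n+1)L})$. By the two observations there are $0<\underline{\alpha}\le\overline{\alpha}$ and $\rho\in[0,1)$, all independent of $\sigma$, with $\underline{\alpha}\|x\|^2\le V\le\overline{\alpha}\|x\|^2$ and $A_{i_{h,\ell}}^{\top}P_h(k{+}1)A_{i_{h,\ell}}\preceq\rho P_h(k)$. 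This gives $V(k{+}1)\le\rho V(k)$ for steps inside a window; for a boundary step $k{+}1=k_{(n+1)L}$ I would chain $V(k{+}1)=x(k{+}1)^{\top}P_{h_{n+1}}(0)x(k{+}1)\le x(k{+}1)^{\top}P_{h_n}(T_{h_n})x(k{+}1)\le\rho\,x(k)^{\top}P_{h_n}(T_{h_n}{-}1)x(k)=\rho V(k)$, using (\ref{eq:thm1_2}) and then (\ref{eq:thm1_1}) at the last leg of window $n$. Hence $V(k)\le\rho^{k}V(0)$, so $\|x(k)\|\le\sqrt{\overline{\alpha}/\underline{\alpha}}\,\rho^{k/2}\|x(0)\|=:\beta(\|x(0)\|,k)$ with $\beta\in\mathcal{KL}$ (enlarging $\rho$ to at least $1/2$ if it happens to be $0$), uniformly in $\sigma$ and $x(0)$, which is GUAS.

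The main obstacle is the possible singularity of the $A_i$: propagating the strict decrease (\ref{eq:thm1_1}) through a whole cycle loses strictness, leaving only $\Phi_h^{\top}P_h(T_h)\Phi_h\preceq P_h(0)$, so the argument closes only because the junction conditions (\ref{eq:thm1_2})/(\ref{eq:thm1_3}) are imposed strictly; symmetrically, the backward construction in (b)$\Rightarrow$(a) must reserve a fixed margin $\delta_h$ before adding the $\varepsilon_k$-perturbations. Ensuring all these margins — and the contraction rate $\rho$ — are uniform over the large but finite family of patterns $h$, so that the final $\mathcal{KL}$ bound is independent of $\sigma$, is the delicate part; by contrast, the identification of each switching window with one of the enumerated patterns $h$, and the telescoping estimates, are routine.
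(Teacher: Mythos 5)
Your proposal is correct and follows essentially the same route as the paper: telescoping the clock-dependent inequality (\ref{eq:thm1_1}) and invoking the junction condition (\ref{eq:thm1_2}) for (\emph{a})$\Rightarrow$(\emph{b}), a backward Lyapunov recursion with small positive-definite perturbations for (\emph{b})$\Rightarrow$(\emph{a}), and a cycle-and-clock-dependent Lyapunov function for GUAS. You are in fact more careful than the paper on two points it glosses over — the loss of strictness when the $A_i$ may be singular (so that only the strict junction conditions rescue (\ref{eq:thm1_3})), and the uniform contraction rate over the finite family of patterns needed to turn "strictly decreasing" into an actual $\mathcal{KL}$ estimate — but these are refinements of the same argument rather than a different one.
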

\begin{proof}
\textbf{(\emph{a}) $\Rightarrow$ GUAS\emph{:}}  Consider $L$-switching-cycle  $[k_{nL},k_{(n+1)L}]$ with $\mathcal{L}_h$,  $h \in \mathcal{I}[1, N(N-1)^{L-1}d^{L}]$, a Lyapunov function candidate is constructed in the following form
\begin{align} \label{eq:pthm1_1}
 V_{h}(x(k)) = x^{\top}(k)P_{h}(k-k_{nL}) x(k),~k \in [k_{nL},k_{(n+1)L}]
\end{align}
where $P_{h}(k) \succ 0$ for all $k \in \mathcal{I}[0,T_h]$ and  $h \in \mathcal{I}[1, N(N-1)^{L-1}d^{L}]$.

Defining $\Delta V_{h}(k) = V_{h}(x(k+1))-V_{h}(x(k))$ and using (\ref{eq:thm1_1}), the following result
\begin{align} \label{eq:pthm1_2}
    \Delta V_{h}(k) = x^{\top}(k)\left( A_{i_{h,\ell}}^{\top}P_{h}(k+1)A_{i_{h,\ell}}-P_{h}(k)\right) x(k) < 0 
\end{align}
holds for any $k \in [k_{nL},k_{(n+1)L}]$.

Then,  let's consider the concatenation between two individual $L$-switching cycles. Suppose there are two $L$-switching cycles $h,q \in \mathcal{I}[1, N(N-1)^{L-1}d^{L}]$, and assume that system is switching from cycle $q$ to $h$ at time instant $k_{nL}$, one has $V_q(x(k_{nL})) = x^{\top}(k_{nL})P_{q}(T_q)x(k_{nL})$ and $V_h(x(k_{nL})) = x^{\top}(k_{nL})P_{h}(0)x(k_{nL})$. From (\ref{eq:thm1_2}), we have
\begin{align} \label{eq:pthm1_3_0}
    V_h(x(k_{nL})) < V_q(x(k_{nL}))
\end{align}
which implies that the value of Lyapunov function (\ref{eq:pthm1_1}) is also strictly decreasing at the concatenation instant of two $L$-switching cycles. 

Therefore, combining (\ref{eq:pthm1_2}) and (\ref{eq:pthm1_3_0}), Lyapunov function (\ref{eq:pthm1_1}) is strictly decreasing along with the time under the prescribed ranged dwell time switching, the GUAS of system (\ref{eq:system}) can be established by standard Lyapunov theorem \cite{khalil2002nonlinear}. The proof of (\emph{a}) $\Rightarrow$ GUAS is complete.

\textbf{(\emph{a}) $\Rightarrow$ (\emph{b})\emph{:}} Starting from $i_{h,1}$, we  iterate (\ref{eq:thm1_1}),  the  following  inequality can be obtained
\begin{align} \label{eq:pthm1_3}
    (A_{i_{h,1}}^{\tau_{h,1}})^{\top}P_{h}(\tau_{h,1})A_{i_{h,1}}^{\tau_{h,1}} -  P_{h}(0)\prec 0.
\end{align}
Then, for $i_{h,2}$, one has
\begin{align} \label{eq:pthm1_4}
        (A_{i_{h,2}}^{\tau_{h,2}})^{\top}P_{h}(\tau_{h,1}+\tau_{h,2})A_{i_{h,2}}^{\tau_{h,2}} -  P_{h}(\tau_{h,1})\prec 0.
\end{align}
Combining (\ref{eq:pthm1_3}) and (\ref{eq:pthm1_4}), it yields
\begin{align} \label{eq:pthm1_5}
        (A_{i_{h,2}}^{\tau_{h,2}}A_{i_{h,1}}^{\tau_{h,1}} )^{\top}P_{h}(\tau_{h,1}+\tau_{h,2})A_{i_{h,2}}^{\tau_{h,2}}A_{i_{h,1}}^{\tau_{h,1}}  -  P_{h}(0)\prec 0.
\end{align}

Repeating the above process from $\ell=1$ to $\ell=L$, the following inequality holds
\begin{align} \label{eq:pthm1_6}
        \left(\prod_{\ell={0}}^{L-1}A^{\tau_{h,L-\ell}}
        _{i_{h,L-\ell}}\right)^{\top}P_{h}\left(\sum_{\ell=1}^{L}{\tau_{h,\ell}}\right)\prod_{\ell={0}}^{L-1}A^{\tau_{h,L-\ell}}
        _{i_{h,L-\ell}} -  P_{h}(0)\prec 0.
\end{align}

Denoting $\Phi_h = \prod_{\ell={0}}^{L-1}A^{\tau_{h,L-\ell}}
        _{i_{h,L-\ell}}$ and due to $T_h = \sum_{\ell=1}^{L}\tau_\ell$, (\ref{eq:pthm1_6}) can be rewritten to
\begin{align} \label{eq:pthm1_7}
        \Phi_h^{\top}P_{h}(T_h)\Phi_h-  P_{h}(0)\prec 0.
\end{align}

Then, using (\ref{eq:thm1_2}), it leads to
\begin{align} \label{eq:pthm1_8}
            \Phi_h^{\top}P_{h}(T_h)\Phi_h-  P_{q}(T_q)\prec 0.
\end{align}

Letting $P_{h}(T_h) = P_{h}$ and $P_{q}(T_q) = P_{q}$ in (\ref{eq:pthm1_8}), we can conclude that (\ref{eq:thm1_3}) in Statement (\emph{b}) hold.  The proof of (\emph{a}) $\Rightarrow$ (\emph{b}) is complete.

\textbf{(\emph{b}) $\Rightarrow$ (\emph{a})\emph{:}} First, we prove that there always exist $P_{h}(k) \succ 0$ such that (\ref{eq:thm1_1}) holds. 

Consider the following equation 
\begin{align}\label{eq:pthm1_9}
    A_{i_{h,\ell}}^{\top}P_{h}(k+1)A_{i_{h,\ell}}-P_{h}(k) = -Q_{h}(k)
\end{align}
where $Q_{h}(k) \succ 0$ which can be arbitrarily chosen. Given any $P_{h}(T_h)\succ 0$ and staring from $i_{h,L}$, there always exists the solution $P_{h}(k)$ in $[T_h-\tau_{h,L},T_h]$ to (\ref{eq:pthm1_9}) in the form of
\begin{align} \label{eq:pthm1_10}
    P_{h}(k) = (A_{i_{h,L}}^{T_h-k})^{\top}P_{h}(T_h)A_{i_{h,L}}^{T_h-k}+\hat Q_{h}(k)
\end{align}
where 
\begin{align*}
    \hat Q_{h}(k)=\sum_{n=k}^{T_h-1}\left(A_{i_{h,L}}^{(n-k)}\right)^{\top}Q_{h}(n)A_{i_{h,L}}^{(n-k)} \succ  0.
\end{align*}
It is explicitly implies that $P_{h}(k) \succ 0$, $k \in [T_h-\tau_{h,L},T_h]$. We can let $k=T_h-\tau_{h,L}$ in (\ref{eq:pthm1_10}), it arrives at 
\begin{align}\label{eq:pthm1_11}
         P_{h}(T_h-\tau_{h,L}) = (A_{i_{h,L}}^{\tau_{h,L}})^{\top}P_{h}(T_h)A_{i_{h,L}}^{\tau_{h,L}}+\hat Q_{h}(T_h-\tau_{h,L})
\end{align}
then, we can use $P_{h}(T_h-\tau_{h,L})$ to repeat above process to obtain $P_{h}(k) \succ 0$, $k \in [T_h-\tau_{h,L-2},T_h-\tau_{h,L-1}]$. 

Repeating above process from $\ell = L$ to $\ell =  1$, one can obtain $P_{h}(k) \succ 0$, $k \in [0,T_h]$ such that (\ref{eq:pthm1_9}) holds which implies (\ref{eq:thm1_1}) can be established. In particular, letting $k=0$, we can obtain that
\begin{align}\label{eq:pthm1_12}
 P_{h}(0) = \left(\prod_{\ell={0}}^{L-1}A_{i_{h,L-\ell}}^{\tau_{h,L-\ell}}\right)^{\top}P_{h}(T_h)\prod_{\ell={0}}^{L-1}A^{\tau_{h,L-\ell}}
        _{i_{h,L-\ell}} + W_h
\end{align}
where 
\begin{align} \nonumber
    W_h =\sum_{s=1}^{L-1}\left(\prod_{\ell={1}}^{s}A_{i_{h,L-\ell}}^{\tau_{h,L-\ell}}\right)^{\top}\hat Q_{h}(T_h-\tau_{h,L-s})\prod_{\ell={1}}^{s}A^{\tau_{h,L-\ell}}_{h,i_{L-\ell}}
    \\
    + \hat Q_{h,i_1}(0) \nonumber
\end{align}

Then, since (\ref{eq:thm1_3}) holds, it implies that there exist a $\xi>0$ such that
\begin{align} \label{eq:pthm1_13}
    \left(\prod_{\ell={0}}^{L-1}A_{i_{h,L-\ell}}^{\tau_{h,L-\ell}}\right)^{\top}P_{h}\prod_{\ell={0}}^{L-1}A^{\tau_{h,L-\ell}}
        _{i_{h,L-\ell}}-P_{q} \prec -\xi I .
\end{align}

Using (\ref{eq:pthm1_12}) and selecting $P_{h}(T_h)=P_h$, $P_{q}(T_h)=P_q$, we can derive the following inequality
\begin{align} \label{eq:pthm1_24}
    P_{h}(0)-P_{q}(T_q)\prec -\xi I+W_h.
\end{align}

It is noted that $Q_{h}(k)$ in $W_h$ can be arbitrarily chosen so that we are able to adjust $W_h$ sufficiently small to achieve 
\begin{align} \label{eq:pthm1_25}
      P_{h,i_1}(0)-P_{q,i_L}(\tau)\prec -\xi I
\end{align}
Therefore, (\ref{eq:thm1_2}) holds. The proof of (\emph{b}) $\Rightarrow$ (\emph{a}) is complete. 
\end{proof}

\begin{remark}\label{remark2}
Theorem \ref{thm1} provides two sufficient conditions for stability of discrete-time switched linear system (\ref{eq:system}). The following observations can be made for these two conditions: 
\begin{enumerate}
    \item [1)] The stability analysis results are closely related to $L$. A larger $L$ will lead to a less conservative computation result. Notably, a sufficiently large $L$ is expected to even reach nonconservative stability analysis results. Moreover, Theorem \ref{thm1} can be viewed as a generalization of several well-known stability criteria for discrete-time switched linear system (\ref{eq:system}) which as special cases with different $L$ in Theorem \ref{thm1}. The conservatism and comparison with existing results will be further discussed in detail in the following subsections. 
    \item [2)] Condition (\emph{a}) is convex in system matrices $A_i$, thus (\ref{eq:thm1_1})--(\ref{eq:thm1_2}) in Condition (\emph{a}) can be easily extended to other related problems such as robust stability analysis for uncertain switched systems, $\ell_2$-gain analysis, and controller design. 
    \item [3)] Condition (\emph{b}) involves the intricate multiplication of $A^{\tau}_{i_{h,\ell}}$, i.e., $\prod_{\ell={0}}^{L-1}A^{\tau}
        _{i_{h,L-\ell}}$ which makes further extensions not as convenient as Condition (\emph{a}), but the computational complexity is significantly reduced compared with Condition (\emph{a}). 
        A  computation complexity comparison is listed in Table \ref{tab1}. 
\end{enumerate}

\begin{table}
\caption{Computational Complexity of Conditions (\emph{a}) and (\emph{b}) where $M=N(N-1)^{(L-1)}d^{L}$}\label{tab1}
\begin{center}
\renewcommand{\arraystretch}{1.4}
\begin{tabular}{|c|c|c|}
\hline
 \textbf{Conditions} & \textbf{Number of variables $P_h$} & \textbf{Number of LMIs} \\ \hline
 (\emph{a}) & $\sum\nolimits_{h=1}^{M} (T_h+1)$ &  $M^2+\sum\nolimits_{h=1}^{M}(T_h-1)$  \\ \hline
 (\emph{b}) & $M$ &  $M^2$\\ \hline
\end{tabular}
\end{center}
\end{table}
\end{remark}

\subsection{Discussion on Conservatism}

As discussed in Remark \ref{remark2}, the stability analysis results are closely related to $L$. Roughly speaking, larger $L$ introduces more decision variables that lead to more relaxed linear matrix inequality (LMI) conditions, so that less conservative results can be achieved. A question naturally arises here: \emph{Can we achieve nonconservative results in the framework of $L$-switching-cycle approach if a sufficiently large $L$ is chosen?} The following theorem provides an affirmative answer. 

\begin{theorem}\label{thm2}
Given a ranged dwell time $[\tau_{\min},\tau_{\max}]$, switched linear system (\ref{eq:system}) is GUAS with any switching signals $\sigma(k)\in\mathcal{D}_{[\tau_{\min},\tau_{\max}]}$ {if and only if} there exist an $L$ such that (\ref{eq:thm1_1})--(\ref{eq:thm1_2}) in Statement (\emph{a}) or (\ref{eq:thm1_3}) in Statement (\emph{b}) in Theorem \ref{thm1} hold. 
\end{theorem}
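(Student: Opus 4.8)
The plan is to treat the two directions separately. The ``if'' direction needs nothing new: Theorem \ref{thm1} already states that if (\ref{eq:thm1_1})--(\ref{eq:thm1_2}) or (\ref{eq:thm1_3}) holds for some $L$, then system (\ref{eq:system}) is GUAS for every $\sigma(k)\in\mathcal{D}_{[\tau_{\min},\tau_{\max}]}$. Since Statements (\emph{a}) and (\emph{b}) of Theorem \ref{thm1} are equivalent for each fixed $L$, the real content is the ``only if'' direction, and it suffices to exhibit one $L$ for which Statement (\emph{b}) holds.

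For the ``only if'' direction I would argue directly from the class-$\mathcal{KL}$ estimate, without first converting GUAS into an exponential bound. Let $\beta$ be a class-$\mathcal{KL}$ function valid for all admissible signals, so that $\left\|x(k)\right\|\le\beta(\left\|x(0)\right\|,k)$ for every $\sigma(k)\in\mathcal{D}_{[\tau_{\min},\tau_{\max}]}$. Fix an arbitrary $L$, an arbitrary cycle index $h\in\mathcal{I}[1,N(N-1)^{L-1}d^{L}]$, and an arbitrary admissible dwell-time assignment $\Gamma_h$, and look at the one-cycle transition matrix $\Phi_h=\prod_{\ell=0}^{L-1}A^{\tau_{h,L-\ell}}_{i_{h,L-\ell}}$. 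The key observation is that $\Phi_h$ is precisely the state transition matrix, over the horizon $[0,T_h]$, of the switching signal that activates $i_{h,1}$ for $\tau_{h,1}$ steps, then $i_{h,2}$ for $\tau_{h,2}$ steps, and so on up to $i_{h,L}$; because each $\tau_{h,\ell}\in[\tau_{\min},\tau_{\max}]$ and $i_{h,\ell}\ne i_{h,\ell+1}$, this finite pattern is the initial segment of some infinite signal in $\mathcal{D}_{[\tau_{\min},\tau_{\max}]}$ (extend it, e.g., by thereafter alternating two distinct subsystems with dwell time $\tau_{\min}$, which is possible since $N\ge2$). Applying the GUAS estimate to this signal with $\left\|x(0)\right\|=1$ yields $\left\|\Phi_h x(0)\right\|=\left\|x(T_h)\right\|\le\beta(1,T_h)$, and taking the supremum over $\left\|x(0)\right\|=1$ gives $\left\|\Phi_h\right\|\le\beta(1,T_h)$, where $\left\|\cdot\right\|$ denotes the spectral norm.

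Since $T_h=\sum_{\ell=1}^{L}\tau_{h,\ell}\ge L\tau_{\min}\ge L$ and $\beta(1,\cdot)$ is decreasing, this specializes to $\left\|\Phi_h\right\|\le\beta(1,L)$ \emph{uniformly} over all $h$ and all admissible dwell-time assignments. Because $\beta(1,s)\to0$ as $s\to\infty$, we may fix $L$ large enough that $\beta(1,L)<1$; then $\left\|\Phi_h\right\|<1$, equivalently $\Phi_h^{\top}\Phi_h-I\prec0$, for every $h$. Choosing $P_h=I$ for all $h\in\mathcal{I}[1,N(N-1)^{L-1}d^{L}]$ therefore makes $\Phi_h^{\top}P_h\Phi_h-P_q=\Phi_h^{\top}\Phi_h-I\prec0$ hold for \emph{all} pairs $(h,q)$, so (\ref{eq:thm1_3}) in Statement (\emph{b}) is satisfied for this $L$. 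By the implication (\emph{b}) $\Rightarrow$ (\emph{a}) established in Theorem \ref{thm1}, Statement (\emph{a}) holds for the same $L$ as well, which completes the argument.

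The only delicate point is the identification of $\Phi_h$ with the transition matrix of a genuinely admissible switching signal, i.e., the verification that the ranged-dwell-time class is rich enough to realize every $L$-switching-cycle pattern; this is exactly where one uses that a cycle, by Definition \ref{def2}, consists only of full dwell intervals with consecutive subsystem indices distinct, so the embedding into an infinite signal of $\mathcal{D}_{[\tau_{\min},\tau_{\max}]}$ is immediate. Everything after that is a one-line spectral-norm estimate, and, pleasantly, the exponential growth in the number of cycles $N(N-1)^{L-1}d^{L}$ is irrelevant, since the single certificate $P_h=I$ works for all cycles simultaneously.
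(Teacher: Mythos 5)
Your proposal is correct and follows essentially the same route as the paper: both arguments identify $\Phi_h$ with the one-cycle state transition matrix, use the class-$\mathcal{KL}$ decay together with $T_h\ge L\tau_{\min}$ to force $\left\|\Phi_h\right\|<1$ for $L$ large, and then verify Statement (\emph{b}) with constant certificates before invoking the equivalence (\emph{b})~$\Leftrightarrow$~(\emph{a}) from Theorem~\ref{thm1}. Your version is in fact slightly tidier on two points the paper glosses over --- the uniformity of the bound $\beta(1,L)$ over all cycle indices $h$ and admissible dwell-time assignments, and the explicit choice $P_h=I$ rather than a limit statement for arbitrary $P_h,P_q\succ0$ --- but these are refinements of the same argument, not a different one.
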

\begin{proof}
The sufficiency, namely (\ref{eq:thm1_1})--(\ref{eq:thm1_2}) or (\ref{eq:thm1_3}) $\Rightarrow$  GUAS, has been proved in Theorem 1. Here we only need to consider the necessity, that is GUAS $\Rightarrow$ (\ref{eq:thm1_1})--(\ref{eq:thm1_3}) or (\ref{eq:thm1_3}). 

Since system (\ref{eq:system}) is GUAS under with any switching signals $\sigma(k)\in\mathcal{D}_{[\tau_{\min},\tau_{\max}]}$, we consider any $L$-switching-cycle indexed by $h$. Using the definition of state transition matrix $x(k) = \Phi(k,0)x(0)$, we have $x(T_h) = \Phi_h(T_h,0)x(0)$ with 
\begin{align} \label{eq:pthm1_24}
    \Phi_h(T_h,0) = \prod_{\ell={0}}^{L-1}A^{\tau_{h,L-\ell}}
        _{i_{h,L-\ell}}
\end{align}
for an $L$-switching-cycle. Moreover, due to $T_h = \sum\nolimits_{\ell=1}^{L}\tau_{h,\ell}$, we have 
\begin{align} \label{eq:pthm1_24}
    \Phi_h\left(\sum_{\ell=1}^{L}\tau_{h,\ell},0\right) = \prod_{\ell={0}}^{L-1}A^{\tau_{h,L-\ell}}
        _{i_{h,L-\ell}} .
\end{align}

Since system (\ref{eq:thm1_1}) is GUAS, there exists a $\mathcal{KL}$ function $\beta$ such that $\left\|x(T_h)\right\| \le \beta(\left\|x(0)\right\|,T_h)$ which implies $\left\|\Phi(T_h,0)x(0) \right\|\le \beta(\left\|x(0)\right\|,T_h)$. Then, since $\beta$ is a $\mathcal{KL}$ function, it implies $\lim_{T_h \to \infty} \beta(\left\|x(0)\right\|,T_h) = 0$. Furthermore, due to $T_h = \sum\nolimits_{\ell=1}^{L}\tau_{h,\ell}$ where $1 \le \tau_{\min}\le \tau_{h,\ell} \le \tau_{\max} < \infty$,  it is equivalent to $\lim_{L \to \infty} \beta(\left\|x(0)\right\|,\sum\nolimits_{\ell=1}^{L}\tau_{h,\ell}) = 0$. As a result, it leads to $\lim_{L \to \infty}\left\|\Phi(\sum\nolimits_{\ell=1}^{L}\tau_{h,\ell},0)x(0) \right\| = 0$ which leads to $\lim_{L \to \infty}\left\|\Phi(\sum\nolimits_{\ell=1}^{L}\tau_{h,\ell},0)\right\| = 0$.  Therefore, for any $P_h,P_q \succ 0$, we can have 
\begin{align}
    \lim_{L \to \infty} \Phi^{\top}\left(\sum_{\ell=1}^{L}\tau_{h,\ell},0\right)P_h\Phi\left(\sum_{\ell=1}^{L}\tau_{h,\ell},0\right) - P_q  
    = -P_q \prec 0
\end{align}
which means that there exists a sufficiently large $L^*$ such that the following inequality holds
\begin{align}
    \Phi^{\top}\left(\sum_{\ell=1}^{L}\tau_{h,\ell},0\right)P_h\Phi\left(\sum_{\ell=1}^{L}\tau_{h,\ell},0\right) - P_q \prec 0 
\end{align}
for any $L \ge L^*$.

From (\ref{eq:pthm1_24}), we can obtain 
\begin{align}
            \left(\prod_{\ell={0}}^{L-1}A^{\tau_{h,L-\ell}}_{i_{h,L-\ell}}\right)^{\top}{P_h}\prod_{\ell={0}}^{L-1}A^{\tau_{h,L-\ell}}
        _{i_{h,L-\ell}}-P_{q} \prec 0
\end{align}
and thus (\ref{eq:thm1_3}) holds by letting $\Phi_h = \prod_{\ell={0}}^{L-1}A^{\tau_{h,L-\ell}}
        _{i_{h,L-\ell}}$. The proof of GUAS $\Rightarrow$ (\ref{eq:thm1_3}) is complete. Furthermore, since (\ref{eq:thm1_1})--(\ref{eq:thm1_2}) is equivalent to (\ref{eq:thm1_3}) by Theorem \ref{thm1}, we can also conclude that GUAS $\Rightarrow$ (\ref{eq:thm1_1})--(\ref{eq:thm1_2}). The proof is complete.
\end{proof}

Theorem \ref{thm2} explicitly states that the developed $L$-switching-cycle method is not only able to provide sufficient stability conditions, but also can achieve nonconservativeness as long as a sufficiently large $L$ is provided. 

\subsection{Comparison with Existing Results}
Theorems \ref{thm1} and \ref{thm2} covers several existing stability analysis results for discrete-time switched linear systems. By particularly selecting $L$ and considering special dwell time, e.g., $\tau_{\max}=\tau_{\min}=1$ implies arbitrary switching, Theorems \ref{thm1} and \ref{thm2} can be reduced to those existing results. 

Letting $L=1$, we have $T_h \in [\tau_{\min},\tau_{\max}]$, and thus (\ref{eq:thm1_1})--(\ref{eq:thm1_2}) in Condition (\emph{a}) are reduced to (\ref{eq:lemma1_1})--(\ref{eq:lemma1_2}) in Lemma \ref{lemma1}. In addition, 
(\ref{eq:thm1_3}) in Condition (\emph{b})  are reduced to (\ref{eq:lemma2_1}) in Lemma \ref{lemma2}. 
Therefore, Lemma \ref{lemma1} and Lemma \ref{lemma2} can be viewed as special cases of the developed $L$-switching-cycle method with $L=1$. The results in Lemma \ref{lemma1} and Lemma \ref{lemma2} are both sufficient conditions. In this paper, by introducing the concept of $L$-switching-cycle, the obtained stability analysis results will be less conservative and even be able to reach non-conservativeness as shown in Theorem \ref{thm2}. 

Theorems \ref{thm1} and \ref{thm2} also cover results regarding arbitrary switching. In \cite{xiang2017robust}, a multiple-step method is proposed to deal with stability analysis for discrete-time linear system under arbitrary switching.  If we enforce $\tau_{\max} = \tau_{\min}=1$ and allow $i_{h,\ell} = i_{h,\ell+1} $ in Statements (\emph{a}) and (\emph{b}) in Theorem \ref{thm1}, the $L$-switching-cycle method is reduced to the multiple-step method, e.g., (\ref{eq:thm1_1})--(\ref{eq:thm1_2}) become
\begin{align} \label{eq:14_1}
         A_{i_{h,\ell}}^{\top}P_{h}(k+1)A_{i_{h,\ell}}-P_{h}(k) &\prec 0
         \\
         P_{h}(0) - P_{q}(L) &\prec 0  \label{eq:14_2}
    \end{align}
where $h,q \in \mathcal{I}[1, N^{L}]$. Moreover, (\ref{eq:thm1_3}) is reduced to the form of
\begin{align} \label{eq:14_3}
     \left(\prod_{\ell={0}}^{L-1}A
        _{i_{h,L-\ell}}\right)^{\top}{P_h}\prod_{\ell={0}}^{L-1}A
        _{i_{h,L-\ell}}-P_{q} \prec 0
\end{align}
where $h,q \in \mathcal{I}[1, N^{L}]$.
It is noted that conditions (\ref{eq:14_1})--(\ref{eq:14_3}) are exactly same as in \cite{xiang2017robust}. Furthermore, as proved in \cite{xiang2017robust}, the non-conservativeness of stability analysis results can be obtained as long as the number of steps is sufficiently large. Theorem \ref{thm2} can obtain the same nonconservative result as long as $L$ is large enough. Moreover, if we further enforce $L=1$ along with  $\tau_{\max} = \tau_{\min}=1$, the results can be relaxed to 
\begin{align}
    A_i^{\top}P_iA_i - P_j \prec 0, ~i,j \in \mathcal{I}[1,N]
\end{align}
which is the classical switched Lyapunov function approach proposed in \cite{daafouz2002stability}. Therefore, the multiple-step method in \cite{xiang2017robust} and switched Lyapunov function approach in \cite{daafouz2002stability} are also special cases of $L$-switching-cycle method with $\tau_{\max} = \tau_{\min}=1$ and further with $L=1$, respectively.

\section{Example}
In this example, we consider system (\ref{eq:system}) with two subsystems as below:
\begin{align}
    A_1 = \begin{bmatrix}
    1  & 0.1
    \\ 
    -0.2 & 0.9
    \end{bmatrix},~    A_2 = \begin{bmatrix}
    1 & 0.1
    \\
    -0.9 & 0.9
    \end{bmatrix}
\end{align}
For the sake of simplicity, we consider periodic switching, i.e., $k_{n+1}-k_{n} = \tau$, $\forall n \in \mathbb{N}$.  
It has been demonstrated in \cite{dehghan2013computations} that this system is GUAS under any switching with minimum dwell time $\tau_{\min} \ge 15$. Thus, we can conclude that the system is GUAS under periodic switching for all cycles with dwell time $\tau \ge 15$.  

In this example, we will further explore the stability of this system under periodic  switching with dwell time $\tau \le 15$. We employ $L$-switching-cycle method with $L \in \mathcal{I}[1,10]$ to examine stability with dwell time $\tau \in \mathcal{I}[1,15]$. The computation results of admissible dwell time $\tau$ are elucidated in Fig. \ref{fig_2} and Table \ref{tab2}.  It can be observed that $L$-switching-cycle method can lead to a less conservative stability analysis results for most of cases, i.e., Lemmas \ref{lemma1} and \ref{lemma2} fail for stability analysis of $\tau \in \{ 1,2,3,4,5,9,12,13,15\}$. Moreover, the results in Table \ref{tab2} are nonconservative stability results for all $\tau \in \mathcal{I}[1,15]$ since $A_1^{\tau}A_2^{\tau}$, $\tau \in \{  6,7,8,14\}$, contains eigenvalues outside the unit disc which directly leads to instability of the system under periodic switching with dwell time $\tau\in \{  6,7,8,14\}$. 

To validate the theoretical analysis results by the $L$-switching-cycle method, the state responses are presented in Fig. \ref{fig_1}. It can be observed that systems trajectories diverge when $\tau \in \{6,7,8,14\}$ while trajectories converge in the rest of cases, which is consistent with Fig. \ref{fig_1} and Table \ref{tab2}.

\begin{figure}
\centering
	\includegraphics[width=9cm]{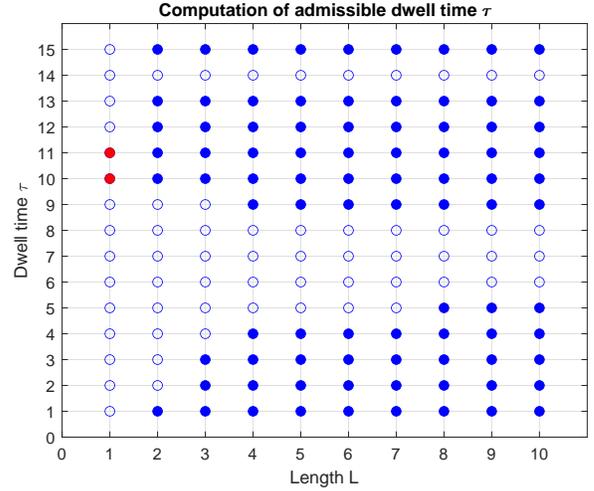}
	\caption{Computation results of admissible dwell time $\tau$ with different length $L$. Blue and red solid circles are admissible dwell time with respect to the given $L$. Red solid circles means that the results can be also obtained by Lemmas \ref{lemma1} and \ref{lemma2}.}
	\label{fig_2} 
\end{figure}

\begin{table}
\caption{Stability Analysis Results with Different Dwell Time $\tau$}\label{tab2}
\begin{center}
\renewcommand{\arraystretch}{1.2}
\begin{tabular}{|c|c|c|c|}
\hline
 \textbf{Dwell Time} & \textbf{Stability Results} &\textbf{Length $L$} & \textbf{Lemmas \ref{lemma1}, \ref{lemma2}} \\ \hline
$\tau = 1$ & Stable  &  $L \ge 2$ &  No\\ \hline
 $\tau = 2$ & Stable  &  $L \ge 3$ &  No\\ \hline
 $\tau = 3$ & Stable  &  $L \ge 3$  & No\\ \hline
 $\tau = 4$ & Stable  &  $L \ge 4$  & No\\ \hline
 $\tau = 5$ & Stable  &  $L \ge 8$  & No\\ \hline
 $\tau = 6$ & Unstable  &  --  & --  \\ \hline
 $\tau = 7$ & Unstable  &  --  &  -- \\\hline
 $\tau = 8$ & Unstable  &  -- & --\\ \hline
 $\tau = 9$ & Stable  &  $L \ge 4$  & No\\ \hline
 $\tau = 10$ & Stable  &  $L \ge 1$  &  Yes\\ \hline
 $\tau = 11$ & Stable  &  $L \ge 1$  & Yes \\ \hline
 $\tau = 12$ & Stable  &  $L \ge 2$   & No\\ \hline
 $\tau = 13$ & Stable  &  $L \ge 2$  & No\\ \hline
 $\tau = 14$ &  Unstable   & -- & -- \\ \hline
 $\tau = 15$ & Stable  &  $L \ge 2$  & No\\ \hline
\end{tabular}
\end{center}
\end{table}

\begin{figure*}
\centering
	\includegraphics[width=17.5cm]{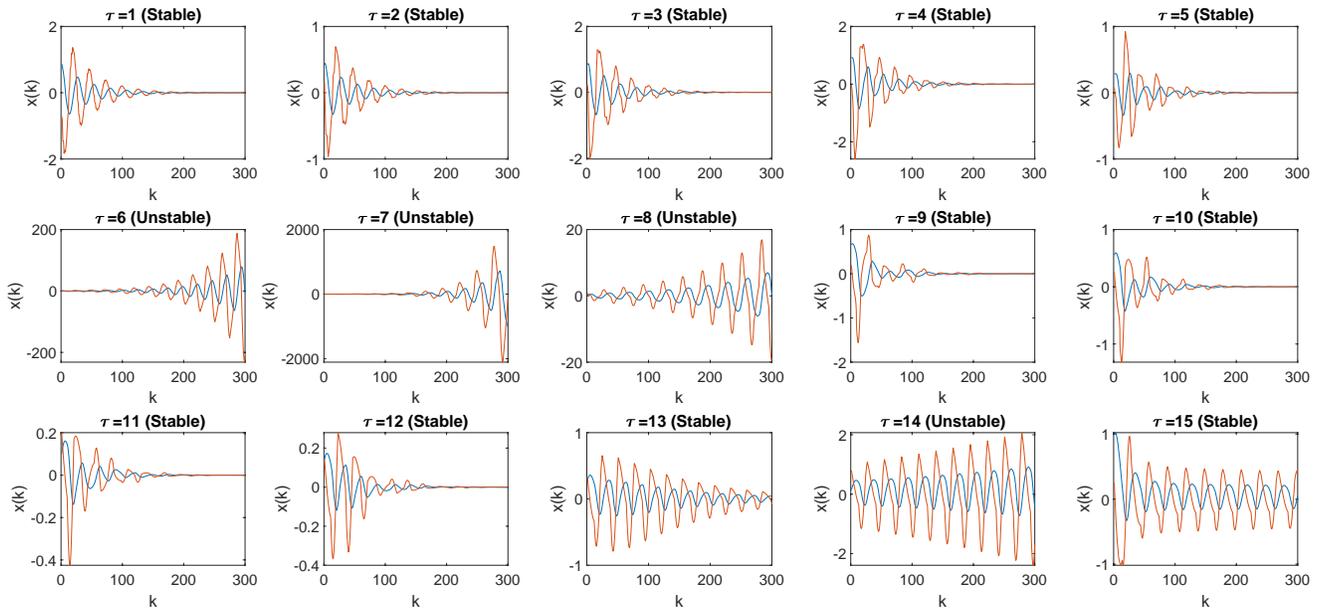}
	\caption{State responses under periodic switching with $\tau \in \mathcal{I}[1,15]$. It can be observed that systems trajectories diverges (unstable) when $\tau \in \{6,7,8,14\}$ while trajectories converges (stable) when $\tau \in \{ 1,2,3,4,5,9,10,11,12,13,15\}$, which is consistent with Fig. \ref{fig_1} and Table \ref{tab2}.}
	\label{fig_1} 
\end{figure*}

\section{Conclusions}
This paper proposes a novel concept called $L$-switching-cycle to perform stability analysis for discrete-time switched linear systems. Two sufficient stability conditions are developed based on $L$-switching-cycle, and it has been proved that two conditions are essentially equivalent in stability analysis results. Two stability results can be viewed as improvements over results based on multiple Lyapunov functions and clock-dependent Lyapunov function methods. Furthermore, if a sufficiently large $L$ is employed, two stability conditions can eventually achieve nonconservative stability analysis results. It also can be observed that $L$-switching-cycle stability criteria cover a variety of existing stability criteria.  Based on the results proposed in this paper, the extensions of $L$-switching-cycle to related problems such as controller design, input-output performance analysis needs further investigations. In addition, how to reduce the computational complexity and
make it applicable for switched systems with a large number of subsystems also needs further studies.

\bibliographystyle{ieeetr}
\bibliography{ref}

\begin{thebibliography}{10}

\bibitem{ma2004bifurcation}
Y.~Ma, H.~Kawakami, and C.~K. Tse, ``Bifurcation analysis of switched dynamical
  systems with periodically moving borders,'' {\em IEEE Transactions on
  Circuits and Systems I: Regular Papers}, vol.~51, no.~6, pp.~1184--1193,
  2004.

\bibitem{torikai1998synchronization}
H.~Torikai and T.~Saito, ``Synchronization of chaos and its itinerancy from a
  network by occasional linear connection,'' {\em IEEE Transactions on Circuits
  and Systems I: Fundamental Theory and Applications}, vol.~45, no.~4,
  pp.~464--472, 1998.

\bibitem{mitsubori1997dependent}
K.~Mitsubori and T.~Saito, ``Dependent switched capacitor chaos generator and
  its synchronization,'' {\em IEEE Transactions on Circuits and Systems I:
  Fundamental Theory and Applications}, vol.~44, no.~12, pp.~1122--1128, 1997.

\bibitem{heemels2012periodic}
W.~H. Heemels, M.~Donkers, and A.~R. Teel, ``Periodic event-triggered control
  for linear systems,'' {\em IEEE Transactions on Automatic Control}, vol.~58,
  no.~4, pp.~847--861, 2012.

\bibitem{xiang2017event}
W.~Xiang and T.~T. Johnson, ``Event-triggered control for continuous-time
  switched linear systems,'' {\em IET Control Theory \& Applications}, vol.~11,
  no.~11, pp.~1694--1703, 2017.

\bibitem{hauroigne2011switched}
P.~Hauroigne, P.~Riedinger, and C.~Iung, ``Switched affine systems using
  sampled-data controllers: robust and guaranteed stabilization,'' {\em IEEE
  Transactions on Automatic Control}, vol.~56, no.~12, pp.~2929--2935, 2011.

\bibitem{zhang2012network}
L.~Zhang, H.~Gao, and O.~Kaynak, ``Network-induced constraints in networked
  control systems—a survey,'' {\em IEEE transactions on industrial
  informatics}, vol.~9, no.~1, pp.~403--416, 2012.

\bibitem{hetel2008equivalence}
L.~Hetel, J.~Daafouz, and C.~Iung, ``Equivalence between the
  lyapunov--krasovskii functionals approach for discrete delay systems and that
  of the stability conditions for switched systems,'' {\em Nonlinear Analysis:
  Hybrid Systems}, vol.~2, no.~3, pp.~697--705, 2008.

\bibitem{wang2019necessary}
T.~Wang and W.~Xiang, ``Necessary and sufficient conditions to stability of
  discrete-time delay systems,'' {\em Journal of the Franklin Institute},
  vol.~356, no.~16, pp.~9788--9803, 2019.

\bibitem{xiang2018parameter}
W.~Xiang, ``Parameter-memorized lyapunov functions for discrete-time systems
  with time-varying parametric uncertainties,'' {\em Automatica}, vol.~87,
  pp.~450--454, 2018.

\bibitem{liberzon2003switching}
D.~Liberzon, {\em Switching in systems and control}.
\newblock Springer Science \& Business Media, 2003.

\bibitem{lin2009stability}
H.~Lin and P.~J. Antsaklis, ``Stability and stabilizability of switched linear
  systems: a survey of recent results,'' {\em IEEE Transactions on Automatic
  control}, vol.~54, no.~2, pp.~308--322, 2009.

\bibitem{goedel2012hybrid}
R.~Goedel, R.~G. Sanfelice, and A.~R. Teel, {\em Hybrid dynamical systems:
  modeling stability, and robustness}.
\newblock Princeton University Press, Princeton, NJ, 2012.

\bibitem{daafouz2002stability}
J.~Daafouz, P.~Riedinger, and C.~Iung, ``Stability analysis and control
  synthesis for switched systems: a switched lyapunov function approach,'' {\em
  IEEE Transactions on Automatic Control}, vol.~47, no.~11, pp.~1883--1887,
  2002.

\bibitem{xiang2017robust}
W.~Xiang, H.-D. Tran, and T.~T. Johnson, ``Robust exponential stability and
  disturbance attenuation for discrete-time switched systems under arbitrary
  switching,'' {\em IEEE Transactions on Automatic Control}, vol.~63, no.~5,
  pp.~1450--1456, 2017.

\bibitem{geromel2006stability}
J.~C. Geromel and P.~Colaneri, ``Stability and stabilization of discrete time
  switched systems,'' {\em International Journal of Control}, vol.~79, no.~07,
  pp.~719--728, 2006.

\bibitem{hespanha1999stability}
J.~P. Hespanha and A.~S. Morse, ``Stability of switched systems with average
  dwell-time,'' in {\em Proceedings of the 38th IEEE conference on decision and
  control (Cat. No. 99CH36304)}, vol.~3, pp.~2655--2660, IEEE, 1999.

\bibitem{zhai2001stability}
G.~Zhai, B.~Hu, K.~Yasuda, and A.~N. Michel, ``Stability analysis of switched
  systems with stable and unstable subsystems: an average dwell time
  approach,'' {\em International Journal of Systems Science}, vol.~32, no.~8,
  pp.~1055--1061, 2001.

\bibitem{briat2014convex}
C.~Briat, ``Convex lifted conditions for robust $\ell_2$-stability analysis and
  $\ell_2$-stabilization of linear discrete-time switched systems with minimum
  dwell-time constraint,'' {\em Automatica}, vol.~50, no.~3, pp.~976--983,
  2014.

\bibitem{lee2006uniform}
J.-W. Lee and G.~E. Dullerud, ``Uniform stabilization of discrete-time switched
  and markovian jump linear systems,'' {\em Automatica}, vol.~42, no.~2,
  pp.~205--218, 2006.

\bibitem{xiang2016necessary}
W.~Xiang, ``Necessary and sufficient condition for stability of switched
  uncertain linear systems under dwell-time constraint,'' {\em IEEE
  Transactions on Automatic Control}, vol.~61, no.~11, pp.~3619--3624, 2016.

\bibitem{khalil2002nonlinear}
H.~K. Khalil and J.~W. Grizzle, {\em Nonlinear systems}, vol.~3.
\newblock Prentice hall Upper Saddle River, NJ, 2002.

\bibitem{li2019dwell}
Y.~Li and H.~Zhang, ``Dwell time stability and stabilization of interval
  discrete-time switched positive linear systems,'' {\em Nonlinear Analysis:
  Hybrid Systems}, vol.~33, pp.~116--129, 2019.

\bibitem{briat2013convex}
C.~Briat, ``Convex conditions for robust stability analysis and stabilization
  of linear aperiodic impulsive and sampled-data systems under dwell-time
  constraints,'' {\em Automatica}, vol.~49, no.~11, pp.~3449--3457, 2013.

\bibitem{dehghan2013computations}
M.~Dehghan and C.-J. Ong, ``Computations of mode-dependent dwell times for
  discrete-time switching system,'' {\em Automatica}, vol.~49, no.~6,
  pp.~1804--1808, 2013.

\end{thebibliography}

\end{document}